\DeclareMathOperator{\pnt}{\raise 0.5mm \hbox{\large\textbf{.}}}
\newcommand{\note}[2][ ]{}
\newtheorem{theorem}{Theorem}[section]
\newtheorem{lemma}[theorem]{Lemma}
\newtheorem{corollary}[theorem]{Corollary}
\theoremstyle{definition}
\newtheorem{definition}[theorem]{Definition} 
\newtheorem{remark}[theorem]{Remark}
\newtheorem{example}[theorem]{Example}
\definecolor{darkblue}{rgb}{0,0,0.6}
\title[The KOH terms and classes of unimodal $N$-modular diagrams]{The KOH terms and classes \\of unimodal $N$-modular diagrams} 
\author{Fabrizio Zanello} \address{Department of Mathematics\\ MIT\\ Cambridge, MA 02139-4307, {\tiny and}}
\address{Department of Mathematical  Sciences\\ Michigan Tech\\ Houghton, MI  49931-1295}
\email{zanello@math.mit.edu}
\thanks{2010 \emph{Mathematics Subject Classification.} Primary: 05A17; Secondary: 05A19, 05A15}
\keywords{Integer partition; KOH; Gaussian polynomial; Bijective proof; Modular diagram; Ferrers diagram; MacMahon diagram; Unimodality}
\begin{document}


\begin{abstract}
We show how certain suitably modified $N$-modular diagrams of integer partitions provide a nice combinatorial interpretation for the general term of Zeilberger's KOH identity. This identity is the reformulation of O'Hara's famous proof of the unimodality of the Gaussian polynomial as a combinatorial identity. In particular, we determine, using different bijections, two main natural classes of modular diagrams of partitions with bounded parts and length, having the KOH terms as their generating functions. One of our results greatly extends recent theorems of J. Quinn \emph{et al.}, which presented striking applications to quantum physics.
\end{abstract}

\maketitle

\section{Introduction} 

The \emph{Gaussian polynomial} $\binom {n}{k}_q$ is the $q$-analogue of the binomial coefficient $\binom {n}{k}$. It is defined as
$$\binom {n}{k}_q:= \frac{(1-q)(1-q^2)\cdots (1-q^n)}{(1-q)(1-q^2)\cdots (1-q^k)\cdot (1-q)(1-q^2)\cdots (1-q^{n-k})}.$$
Gaussian polynomials play an important role in several fields of mathematics, including bijective combinatorics and  partition theory. Most importantly for us here, $\binom {n}{k}_q$ is the generating function for all integer partitions whose Ferrers diagrams are contained inside an $(n-k) \times k$ rectangle. It is well known that Gaussian polynomials are unimodal and symmetric about $k(n-k)/2$. D. Zeilberger's KOH Theorem \cite{Z1,Z2} gives a beautiful insight into the combinatorics of K. O'Hara's celebrated constructive proof \cite{OH} of the unimodality of Gaussian polynomials; in particular, the KOH Theorem decomposes a Gaussian polynomial into a finite sum of suitable polynomials, all unimodal, with nonnegative integer coefficients, and symmetric about the same degree. 

Given a nonnegative integer $n$, we say that the weakly decreasing sequence $\lambda=(\lambda_1,\lambda_2,\dots )$ of nonnegative integers is a \emph{partition} of $n$, and sometimes write $\lambda\vdash n$, if $\sum_{i\geq 1}\lambda_i=n$. Set $Y_i:= \sum_{j=1}^i\lambda_j$ for all $i\geq 1$, and $Y_0:=0$. We have:

\begin{theorem}[KOH]\label{koh}
$$\binom{a+b}{a}_q=\sum_{\lambda\vdash b} q^{2\sum_{i\geq 1}\binom{\lambda_i}{2}} \prod_{j\geq 1} \binom{j(a+2)-Y_{j-1}-Y_{j+1}}{\lambda_j-\lambda_{j+1}}_q.$$
\end{theorem}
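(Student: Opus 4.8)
The plan is to give a bijective proof: I would exhibit each summand on the right as the size generating function of an explicit class of partitions (equivalently, of suitably decorated $N$-modular diagrams), and then show that, as $\lambda$ ranges over all partitions of $b$, these classes partition the set of all partitions fitting inside the ambient box. Since $\binom{a+b}{a}_q$ is, by the interpretation recalled above, the size generating function of the partitions whose Ferrers diagram lies in a $b\times a$ rectangle (at most $b$ parts, each $\le a$), the identity then follows by summing the generating functions of the pieces.

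First I would unwind the shape of a single term. Using $Y_{j+1}=Y_{j-1}+\lambda_j+\lambda_{j+1}$ one computes that $j(a+2)-Y_{j-1}-Y_{j+1}-(\lambda_j-\lambda_{j+1})=j(a+2)-2Y_j$, so the factor $\binom{j(a+2)-Y_{j-1}-Y_{j+1}}{\lambda_j-\lambda_{j+1}}_q$ is the generating function for partitions fitting in a rectangle of dimensions $(j(a+2)-2Y_j)\times(\lambda_j-\lambda_{j+1})$. Thus a single term is the generating function of the tuples of such fillings $(\nu^{(1)},\nu^{(2)},\dots)$, shifted by the constant weight $2\sum_{i\ge 1}\binom{\lambda_i}{2}$. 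The proof reduces to producing a weight-preserving bijection $\mu\longleftrightarrow\bigl(\lambda;\,\nu^{(1)},\nu^{(2)},\dots\bigr)$ between box partitions $\mu$ and such tuples, under which $|\mu|=2\sum_{i\ge 1}\binom{\lambda_i}{2}+\sum_{j\ge 1}|\nu^{(j)}|$.

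The geometric idea, following O'Hara, is that the $\lambda$-term records a union of symmetric chains of a common type in the lattice of box partitions: the constant $2\sum_i\binom{\lambda_i}{2}$ is the size of a canonical minimal diagram $D_\lambda$ determined by $\lambda$, the index $j$ effectively ranges over the corners of $\lambda$ (where $\lambda_j>\lambda_{j+1}$), and the filling $\nu^{(j)}$ encodes how $\mu$ is grown out of $D_\lambda$ in its $j$-th free region. I would therefore (i) define $D_\lambda$ and check $|D_\lambda|=2\sum_i\binom{\lambda_i}{2}$; (ii) describe the decomposition rule sending a box partition $\mu$ to its type $\lambda$ together with the fillings $\nu^{(j)}$, and the inverse assembly map; and (iii) verify that the fillings range exactly over the rectangles of dimensions $(j(a+2)-2Y_j)\times(\lambda_j-\lambda_{j+1})$ and that distinct $\lambda$ yield disjoint, exhaustive classes. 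Summing over $\lambda\vdash b$ then yields the theorem.

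The main obstacle is step (iii), and even a small example flags the difficulty: for $a=b=2$ and $\lambda=(1,1)$ the second factor is $\binom{5}{1}_q=1+q+q^2+q^3+q^4$, whose relevant side has length $j(a+2)-2Y_j=4$, exceeding $a=2$. Hence the fillings $\nu^{(j)}$ are \emph{not} literal sub-rectangles of $\mu$ inside the ambient box; since $j(a+2)-2Y_j$ can overshoot $a$, the correspondence must route each filling through a folded, modular encoding rather than a naive tiling. Pinning down this encoding — precisely the role played here by the modified $N$-modular diagrams — and proving that the available room in the $j$-th region is exactly $j(a+2)-2Y_j$ while the classes stay disjoint and cover everything, is the crux. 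As a fallback the identity can instead be approached algebraically by induction on $b$ via the $q$-Pascal recurrence $\binom{n}{k}_q=\binom{n-1}{k-1}_q+q^k\binom{n-1}{k}_q$, splitting the sum over $\lambda\vdash b$ according to the removal of a corner box; but matching that recursion term by term is itself delicate, and the bijective route is both more natural here and the one suited to the diagram classes this paper constructs.
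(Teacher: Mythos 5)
The paper does not prove Theorem \ref{koh} at all: it is quoted as a known result of Zeilberger \cite{Z1,Z2}, with \cite{Ma0} cited for an elementary algebraic proof, and everything else in the paper takes the identity as given. So there is no in-paper argument to compare yours against; the only question is whether your proposal stands on its own as a proof.

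It does not, and you essentially concede this. Your preliminary reductions are correct and worth keeping: the computation $j(a+2)-Y_{j-1}-Y_{j+1}-(\lambda_j-\lambda_{j+1})=j(a+2)-2Y_j$ is right, each factor is indeed the generating function for partitions in a $\bigl(j(a+2)-2Y_j\bigr)\times(\lambda_j-\lambda_{j+1})$ rectangle (this is exactly the rectangle $R_j$ used later in the paper's proof of its Theorem \ref{a}), and your $a=b=2$, $\lambda=(1,1)$ example correctly shows that these rectangles can overshoot the ambient $a\times b$ box, so no naive ``cut $\mu$ into sub-rectangles'' map can work. But that observation is where the real mathematical content begins, and the proposal stops there: you never define the canonical diagram $D_\lambda$, never specify the rule assigning a type $\lambda$ to a box partition $\mu$ together with its fillings $\nu^{(j)}$, and never address disjointness or exhaustiveness of the resulting classes --- which is to say, you have not reproduced O'Hara's chain decomposition, and that decomposition \emph{is} the theorem. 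The algebraic fallback via the $q$-Pascal recurrence is likewise only named, not executed; Macdonald's proof \cite{Ma0} does proceed algebraically, but the term-by-term matching you call ``delicate'' is precisely the part that must be supplied. As written, this is a correct orientation toward the problem and an accurate diagnosis of where the difficulty lies, but not a proof.
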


(See \cite{Z1,Z2}, and also \cite{Ma0} for an elementary algebraic proof of the KOH Theorem.) It can be seen that all terms of the sum in the right-hand side of the KOH identity are  unimodal polynomials in $q$, with nonnegative  coefficients, and symmetric about $ab/2$. A very interesting problem is, therefore, to find natural combinatorial interpretations of the KOH summands as generating functions for suitable classes of partitions (which are, as a consequence, rank-unimodal and rank-symmetric). Of course, notice that the KOH summands, at least implicitly, already have a combinatorial meaning, because of how they have been derived in the first place --- by ``algebraizing'' a combinatorial proof.

The summand being contributed by $\lambda=(b,0,0,\dots)$ was studied in \cite{BQQW}; later, more generally, the terms corresponding to all partitions $\lambda$ of the form $\lambda=(b/k,b/k,\dots ,b/k,0,0,\dots)$ have been dealt with in \cite{QT}. This, in conjunction with the KOH Theorem, allowed the authors to give a beautiful proof, and then a generalization, of a conjecture on Fermions, coming from quantum physics. 

Notice that the partitions $\lambda$ studied in \cite{BQQW,QT} correspond precisely to the summands in the KOH identity involving only one nonconstant Gaussian polynomial. The goal of this note is to illustrate how $N$-modular diagrams, after  modifying their standard definition so as to suitably include rows of length zero, provide a nice combinatorial interpretation for any arbitrary term of the sum in the KOH identity. We present, using  different bijections, two main natural classes of modular diagrams of partitions, always contained inside an $a\times b$ rectangle, that have the KOH summands as their (hence symmetric and unimodal) generating functions. One of our results, which holds under some technical assumptions, yields a broad generalization of the theorems of \cite{BQQW} and \cite{QT}.

\section{Definitions and preliminary results}

Let us  briefly recall the main facts and definitions that are  needed in this note. Given a  partition $\lambda=(\lambda_1,\lambda_2,\dots )\vdash  n$, the  nonzero $\lambda_i$  are known as the \emph{parts} of $\lambda $. The number of parts of $\lambda $ is  its \emph{length}, denoted by $l(\lambda)$, which is of course finite. The \emph{multiplicity} of an integer $i\geq 1$ in $\lambda$, denoted by $m_i:=m_i(\lambda)$, is the number of parts of $\lambda$ equal to $i$. Then a partition is sometimes also written as $\lambda=(1^{m_1},2^{m_2},\cdots )$, where the parts of multiplicity zero are  omitted. Notice that, if $\lambda=(\lambda_1,\lambda_2,\dots )=(1^{m_1},2^{m_2},\cdots )\vdash  n$, then $n=\sum_{i\geq 1}im_i$ and $l(\lambda)=\sum_{i\geq 1}m_i$. 

A partition $\lambda$ can be represented geometrically by its {\em Ferrers} (or {\em Young}) {\em diagram}, that is, by a collection of cells, arranged in left-justified rows, with the $i$-th row containing exactly $\lambda_i$ cells. The \emph{conjugate} partition, $\lambda'=(\lambda'_1,\lambda'_2,\dots )$, of $\lambda$ is the partition whose Ferrers diagram is obtained from that of $\lambda$ by interchanging rows and columns. It immediately follows that $m_i(\lambda)=\lambda'_i-\lambda_{i+1}'$ for all $i\geq 1$. For instance,  $\lambda=(5,4,4,4,2,2,1)=(1^1,2^2,4^3,5^1)$ is a partition of 22 of length $l(\lambda)=7$, whose conjugate is $\lambda'=(7,6,4,4,1)$. 

Among the several possible choices, for an introduction to partition theory,  a survey of the  main results and techniques, or  the philosophy behind this remarkably broad field, see \cite{And,AE,Pak}, Section I.1 of \cite{Ma}, and Section 1.8 of \cite{St0}.

It is well known that the generating function for all partitions $\lambda$ contained inside an $a\times b$ rectangle --- that is, partitions  $\lambda$ such that $l(\lambda)\leq b$ and $\lambda_1 \leq a$ --- is  the Gaussian polynomial $\binom {a+b}{b}_q$. This polynomial is clearly symmetric with nonnegative coefficients, and it is also unimodal, as first shown combinatorially by K. O'Hara \cite{OH}. (Several other proofs, coming from different areas of mathematics, are known for the unimodality of Gaussian polynomials; see \cite{Proc,St5,St1980,Sy}.) J. Quinn \emph{et al.}, in \cite{BQQW} and \cite{QT}, enumerated partitions inside an $a\times b$ rectangle subject to some further restrictions. They proved the following two results:

\begin{theorem}[\cite{BQQW}]\label{bqqw}
$$F_{(b)}(q):=q^{b^2-b} \binom{a+2-b}{b}_q$$
is the generating function for all partitions $\lambda $  contained inside an $a\times b$ rectangle,  such that $\lambda_i - \lambda_{i+1}\geq 2$ for all $i\leq b-1$.
\end{theorem}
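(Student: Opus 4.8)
The plan is to establish a weight-shifting bijection between the ``gapped'' partitions in question and ordinary partitions fitting inside a smaller rectangle, and then to invoke the Gaussian-polynomial generating function recalled above. Concretely, let $\lambda$ be a partition with $l(\lambda)\leq b$, $\lambda_1\leq a$, and $\lambda_i-\lambda_{i+1}\geq 2$ for all $i\leq b-1$ (padding $\lambda$ with zeros up to the $b$-th part, so that $\lambda_b\geq 0$ makes sense). I would define the \emph{staircase subtraction}
$$\mu_i:=\lambda_i-2(b-i),\qquad i=1,2,\dots,b.$$
The first point to check is that $\mu=(\mu_1,\dots,\mu_b)$ is again a partition: since $\mu_i-\mu_{i+1}=(\lambda_i-\lambda_{i+1})-2\geq 0$, the sequence is weakly decreasing, and $\mu_b=\lambda_b\geq 0$, so all entries are nonnegative. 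The gap condition in fact forces $\lambda_i\geq 2(b-i)$, so the subtraction never produces a negative entry.

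Next I would pin down the box containing $\mu$. From $\lambda_1\leq a$ we get $\mu_1=\lambda_1-2(b-1)\leq a-2b+2$, while $l(\mu)\leq b$ is immediate from the construction. Conversely, given any partition $\mu$ inside an $(a-2b+2)\times b$ rectangle, setting $\lambda_i:=\mu_i+2(b-i)$ recovers a partition with $l(\lambda)\leq b$, with $\lambda_1=\mu_1+2(b-1)\leq a$, and with $\lambda_i-\lambda_{i+1}=(\mu_i-\mu_{i+1})+2\geq 2$ for $i\leq b-1$. These two maps are mutually inverse, so $\lambda\mapsto\mu$ is a bijection from our class onto the set of all partitions contained inside an $(a-2b+2)\times b$ rectangle.

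It then remains to track the weight. Since
$$\sum_{i=1}^{b}2(b-i)=2\sum_{j=0}^{b-1}j=b(b-1)=b^2-b,$$
we obtain $|\lambda|=|\mu|+(b^2-b)$, so the generating function for the $\lambda$'s is $q^{b^2-b}$ times that for the $\mu$'s. Using the recalled fact that the generating function for all partitions inside a $c\times d$ rectangle is $\binom{c+d}{d}_q$, taking $c=a-2b+2$ and $d=b$ gives $\binom{a-b+2}{b}_q=\binom{a+2-b}{b}_q$; multiplying by $q^{b^2-b}$ yields exactly $F_{(b)}(q)$.

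The argument is short, and the only genuine obstacle is the boundary bookkeeping. One must use that the gap condition is imposed precisely for $i\leq b-1$, and \emph{not} for the passage from $\lambda_b$ to $\lambda_{b+1}=0$: it is exactly this that makes the subtracted staircase terminate at $0$ and makes $\mu_b=\lambda_b\geq 0$ the correct (and automatically satisfied) nonnegativity constraint, so that the bijection lands precisely in the intended rectangle with no off-by-one error.
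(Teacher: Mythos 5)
Your proposal is correct and is precisely the argument the paper attributes to Schur/MacMahon (and to \cite{BQQW}): attaching/detaching the even staircase $(2b-2,2b-4,\dots,4,2)$ of weight $b^2-b$ to set up a weight-shifting bijection with partitions inside an $(a-2b+2)\times b$ rectangle. The bookkeeping on the box dimensions and the exponent $b^2-b$ checks out, so there is nothing to add.
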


We thank an anonymous referee for pointing out to us that a standard combinatorial argument for the previous result (simply consisting of attaching the  even staircase partition $(2b-2, 2b-4, \dots,4, 2)$ of $b^2-b$ to an arbitrary partition contained inside an $(a-2b+2)\times b$ rectangle)  has essentially been known since Schur or MacMahon. In all fairness, the proof given in \cite{BQQW}, which was along the same lines, was equally simple.

\begin{theorem}[\cite{QT}]\label{qt}
Fix a positive integer $k$ dividing $b$. Then
$$F_{\left((b/k)^k\right)}(q):=q^{b^2/k-b} \binom{k(a+2)-{b(2-1/k)}}{{b/k}}_q$$
is the generating function for all partitions $\lambda $   contained inside an $a\times b$ rectangle, such that $\lambda_i - \lambda_{i+k}\geq 2$ for all $i\leq b-k$, and $\lambda_i - \lambda_{i+k-1}\leq 1$ for all $i\equiv 1$ (mod $k$).
\end{theorem}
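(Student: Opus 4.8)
The plan is to prove the statement by a single explicit weight-preserving bijection, bypassing the KOH identity entirely. Write $m=b/k$, and group the (at most $b=km$) rows of an admissible $\lambda$ into $m$ consecutive \emph{blocks} of $k$ rows each, the $t$-th block consisting of rows $(t-1)k+1,\dots,tk$. The hypotheses say exactly that within each block the top and bottom rows differ by at most $1$ (so each block is ``flat'', using at most two consecutive part-values), while $\lambda_i-\lambda_{i+k}\geq 2$ forces a drop of at least $2$ between rows lying in the same position of consecutive blocks. Since $k(a+2)-b(2-1/k)=k(a-2(m-1))+m$, the right-hand side $\binom{k(a+2)-b(2-1/k)}{b/k}_q$ is the generating function for partitions $\mu$ fitting inside a $k(a-2(m-1))\times m$ box, and the prefactor $q^{b^2/k-b}=q^{km(m-1)}$ is a fixed shift. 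So it suffices to biject admissible $\lambda$ with such $\mu$ so that $|\lambda|=km(m-1)+|\mu|$.

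For the forward map I would subtract from \emph{every} row of the $t$-th block the quantity $2(m-t)$ --- a $k$-fold thickening of the even staircase used for Theorem~\ref{bqqw} --- and then record only the block sums, setting $\mu_t:=\sum_{i=(t-1)k+1}^{tk}\bigl(\lambda_i-2(m-t)\bigr)$. The total subtracted is $2k\sum_{t=1}^m(m-t)=2k\binom{m}{2}=km(m-1)$, giving the weight shift. That $\mu=(\mu_1,\dots,\mu_m)$ is a partition follows from the gap hypothesis applied termwise:
$$\mu_t-\mu_{t+1}=\sum_{j=1}^k\bigl(\lambda_{(t-1)k+j}-\lambda_{tk+j}\bigr)-2k\geq 2k-2k=0,$$
and $\mu_1\leq k\cdot a-2k(m-1)=k(a-2(m-1))$ because $\lambda_1\leq a$, so $\mu$ indeed lies in the required box.

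For the inverse, given $\mu$ in the box I would rebuild the $t$-th block as the \emph{unique} partition of $\mu_t$ into exactly $k$ parts differing by at most $1$, namely $(\mu_t\bmod k)$ rows equal to $\lfloor\mu_t/k\rfloor+1$ followed by $k-(\mu_t\bmod k)$ rows equal to $\lfloor\mu_t/k\rfloor$, and then add $2(m-t)$ back to each row of block $t$. Uniqueness of this ``flattening'' (Euclidean division) is what makes the two maps mutually inverse. The one genuine verification is that the reconstructed $\lambda$ again satisfies $\lambda_i-\lambda_{i+k}\geq 2$: from $\mu_t\geq\mu_{t+1}$ one gets $\lfloor\mu_t/k\rfloor\geq\lfloor\mu_{t+1}/k\rfloor$, with equality forcing $(\mu_t\bmod k)\geq(\mu_{t+1}\bmod k)$, so before re-adding the staircase each row of block $t$ is at least the row in the same position of block $t+1$; re-adding supplies the remaining $+2$. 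The bounds $\lambda_1\leq a$ and weak monotonicity across block boundaries fall out of the same estimates.

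I expect the only real friction to be this last equivalence: one must check that the \emph{single} inequality $\mu_t\geq\mu_{t+1}$ between block sums is equivalent to the $k$ \emph{separate} termwise inequalities $\lambda_i-\lambda_{i+k}\geq 2$, and that this is compatible with the flatness of the blocks and with degenerate cases (trailing zero rows, or $\mu_t\equiv 0\pmod k$). This is exactly where the block length $k$, the staircase constant $2$, and the ``difference $\leq 1$'' hypothesis conspire. Everything else is routine. Finally, I would note that this is precisely the $k$-modular fold underlying the paper: the conjugate of each flattened block is the $k$-modular diagram of the integer $\mu_t$, which ties the construction to the modular-diagram framework, and that taking $k=1$ recovers the even-staircase proof of Theorem~\ref{bqqw}.
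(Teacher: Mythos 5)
Your bijection is correct, and it is essentially the paper's own route to this result: your block sums $\mu_t$ play the role of the $\beta_i^j$ in the proof of Theorem \ref{b}, your staircase shift $2(m-t)$ is exactly the choice $\gamma_i-\gamma_{i+1}=2$ made after Corollary \ref{int}, and your ``unique flattening'' of each block is Lemma \ref{unique}. In other words, you have reconstructed the Quinn--Tobiska bijection that Theorem \ref{b} generalizes, so the approach matches the paper's.
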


Note that the generating function of Theorem \ref{bqqw} is the summand in the right-hand side of the KOH formula being contributed by the partition $\lambda=(b)$, while the generating function of Theorem \ref{qt} corresponds to the partition $\lambda=\left((b/k)^k\right)$. Of course, Theorem \ref{bqqw} is  the special case $k=1$ of Theorem \ref{qt}.

We now introduce a definition of an $N$-modular diagram. Our definition will slightly differ from the usual one, in that it also differentiates among  entries equal to zero of a partition. Such a refinement will be essential in our second main result, and will make the first result more elegant. 

\begin{definition}\label{ddd}
Fix a partition $\lambda$ of length $l(\lambda)$, and positive integers $k$ and $N$ such that $k\geq l(\lambda)$. An \emph{$N$-modular diagram of length $k$} is the Ferrers diagram of $\lambda$ to which \emph{a zeroth column} of length $k$ has been added, such that all cells are labeled with an integer between 1 and $N$, all but the rightmost cell of any row are labeled  $N$, and the entries of any column are weakly decreasing from top to bottom. 

If the integer $k$ is clear from the context, we will simply speak of an \emph{$N$-modular diagram}. In particular,  in the proofs of the main theorems of this paper, we will make a repeated use of modular diagrams of partitions contained inside suitable rectangles, and the lengths of the zeroth columns will always coincide with the heights of the rectangles. Finally, we will simply say that a row is labeled $i$ if its rightmost cell is labeled $i$.
\end{definition}

\begin{figure}[h!t]
\includegraphics[scale=0.60]{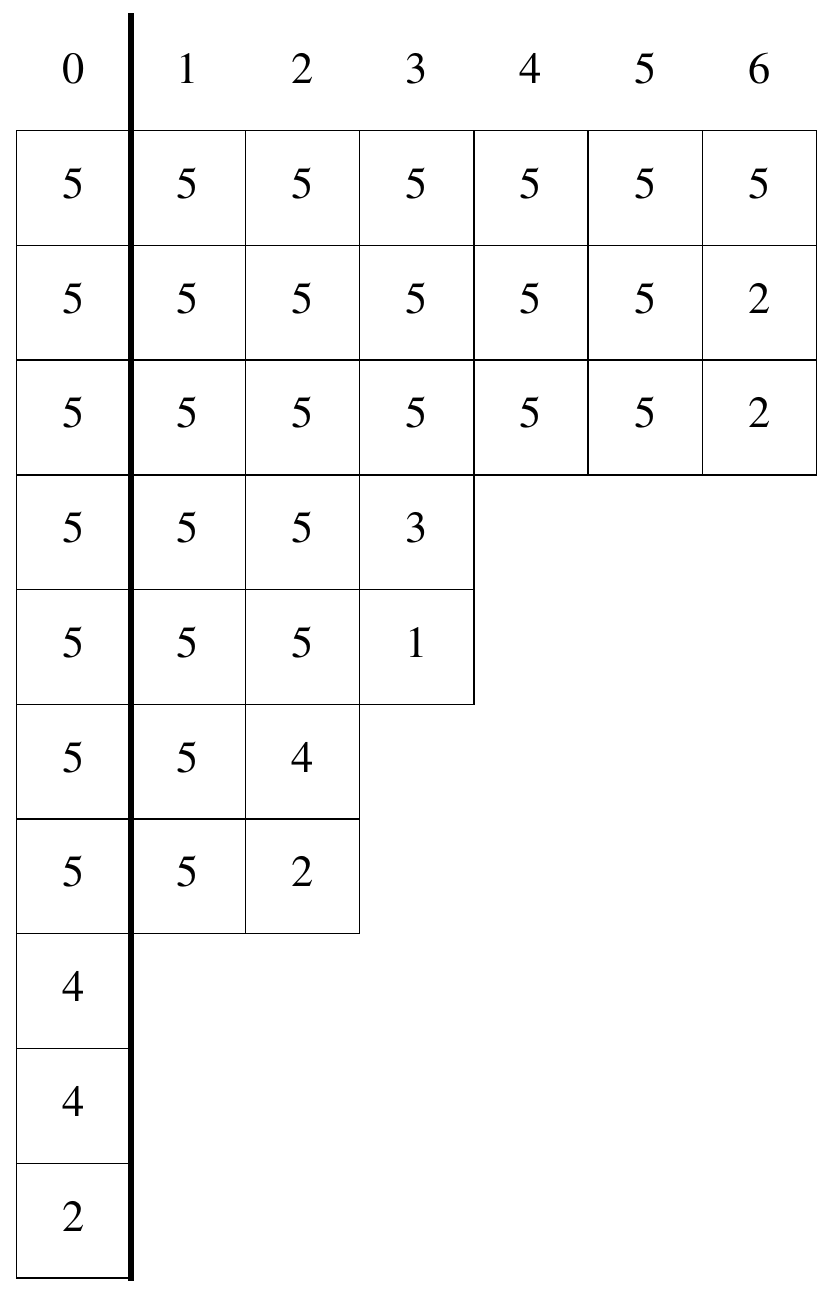} \caption{A 5-modular diagram of length 10 corresponding to $\lambda =(6,6,6,3,3,2,2)$.}
\end{figure}
Notice that, from Definition \ref{ddd}, an $N$-modular diagram must have the zeroth cell of any row labeled  $N$, except (possibly) if that row has length zero. Also, the last cells of the rows with the same length (including the rows having length zero) can be labeled with any integers between 1 and $N$, provided they weakly decrease from top to bottom. Finally, note that 2-modular diagrams can be put naturally  in bijection with \emph{MacMahon diagrams} (again, redefined with a \emph{zeroth column}), which have unmarked and marked cells instead of cells labeled with 2 and 1, respectively. See Figure 1 for one of the $\left(\binom{8}{3}-\binom{7}{2}\right)^2\cdot \left(\binom{7}{2}-\binom{6}{1}\right)^2=275,625$ possible 5-modular diagrams  of length 10 corresponding to the partition $\lambda =(6,6,6,3,3,2,2)$ of 28. 

\section{The main results}

Fix positive integers $a$ and $b$. Our object is to use modular diagrams, as we defined them above, to provide a natural combinatorial interpretation for the KOH summand,
$$F_\lambda(q):=q^{2\sum_{i\geq 1}\binom{\lambda_i}{2}} \prod_{j\geq 1} \binom{j(a+2)-Y_{j-1}-Y_{j+1}}{\lambda_j-\lambda_{j+1}}_q,$$
corresponding to any partition $\lambda\vdash b$.

We present two nice and essentially different classes of (hence symmetric and unimodal) modular diagrams of partitions, both contained inside our $a\times b$ rectangle, which have the $F_\lambda$ as their generating functions. One of these classes can be produced for any arbitrary partition $\lambda$, while the other requires some technical assumption (which we also indicate how to relax by introducing a third large, if less elegant, class of modular diagrams). Our second  modular diagrams provide a very natural (and broad) generalization of the above Theorems \ref{bqqw} and \ref{qt}.


The following lemma is a  known (and trivial) arithmetic fact, of which we omit the proof.

\begin{lemma}\label{unique}
Fix any two integers $c\geq 0$ and $d\geq 1$, and write $c=sd+r$, for the (unique) integers $s\geq 0$ and $0\leq r\leq d-1$. Then 
$$(c_1=s+1,\dots ,c_r=s+1,c_{r+1}=s,\dots ,c_d=s)$$
is the unique partition  $(c_1,\dots ,c_d)$ of $c$  such  that $c_1-c_d\leq 1$.
\end{lemma}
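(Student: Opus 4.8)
The plan is to prove the existence and uniqueness of a near-balanced partition of $c$ into $d$ parts with the bound $c_1 - c_d \le 1$.
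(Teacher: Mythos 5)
Your proposal is not a proof: it only announces the intention to prove existence and uniqueness, without supplying any argument for either. The paper itself omits the proof as a ``known (and trivial) arithmetic fact,'' but if you are going to write one, you must actually carry out the two steps. For existence, check that the displayed sequence is weakly decreasing, that its entries sum to $r(s+1)+(d-r)s=sd+r=c$, and that $c_1-c_d\leq 1$ (this is immediate when $r=0$, since then all parts equal $s$, and when $1\leq r\leq d-1$, since then $c_1=s+1$ and $c_d=s$). For uniqueness, take any weakly decreasing $(c_1,\dots,c_d)$ summing to $c$ with $c_1-c_d\leq 1$: all parts lie in $\{c_d,c_d+1\}$, so if $m$ denotes the number of parts equal to $c_d+1$, then $c=dc_d+m$ with $0\leq m\leq d$; if $m=d$ replace $(c_d,m)$ by $(c_d+1,0)$, and then the uniqueness of quotient and remainder forces $c_d=s$ and $m=r$, while weak decrease forces the $r$ larger parts to occupy the first $r$ positions. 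None of this is present in what you wrote, so as it stands the proposal has a complete gap where the argument should be.
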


Our first main result is:

\begin{theorem}\label{a}
Let $\lambda$ be any arbitrary partition of $b$. Then $F_\lambda$ is the generating function for all $\lambda_1$-modular diagrams $\Lambda $ of length $b$ (contained inside an $a\times b$ rectangle) satisfying the following conditions:
\begin{enumerate}
\item  $\Lambda $ has $\lambda'_i$ rows labeled $i$, for each $i\geq 1$;
\item If we denote by $\Sigma_i$ the sum of  the lengths of all rows of $\Lambda $ labeled $i$, then for each positive integer $j$, we have:
$$\Sigma_{\lambda_{j+1}+1}\leq \Sigma_{\lambda_{j+1}+2}\leq \dots \leq \Sigma_{\lambda_j}\leq j(a+1+\lambda_j+\lambda_{j+1}) -2Y_j;$$
\item Any row labeled with an integer between $\lambda_{j+1}+1$ and $\lambda_j$ has length at least $\lambda_j +\lambda_{j+1}-1$;
\item The difference between the lengths of the largest and the smallest row labeled with the same integer is at most 1.	
\end{enumerate}
(Notice that conditions (2) and (3) are nonempty only when $\lambda_{j}>\lambda_{j+1}$.)
\end{theorem}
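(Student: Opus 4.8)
The plan is to show that $F_\lambda$ factors, block by block, exactly as the generating function of the diagrams in question factors over the ``corners'' of $\lambda$, and then to match the two factorizations term by term using Lemma \ref{unique}.

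First I would rewrite the KOH summand. Since the factor indexed by $j$ is trivial unless $\lambda_j>\lambda_{j+1}$, only the corners of $\lambda$ contribute. Using $Y_{j-1}+Y_{j+1}=2Y_j-\lambda_j+\lambda_{j+1}$, the $j$-th factor $\binom{j(a+2)-Y_{j-1}-Y_{j+1}}{\lambda_j-\lambda_{j+1}}_q$ is the generating function for partitions inside a rectangle of height $h_j:=\lambda_j-\lambda_{j+1}$ and width $w_j:=j(a+2)-2Y_j$. Thus I would aim to attach to each corner $j$ a ``block'' of the diagram whose configurations biject with the partitions inside a $w_j\times h_j$ rectangle, contributing $\binom{w_j+h_j}{h_j}_q$, while the leftover powers of $q$ collect to the prefactor $q^{2\sum_i\binom{\lambda_i}{2}}$.

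Next comes the block decomposition. By condition (1) there are $\lambda'_i$ rows labeled $i$, and for $i$ in the interval $(\lambda_{j+1},\lambda_j]$ attached to a corner $j$ one has $\lambda'_i=j$, so this interval accounts for $jh_j$ rows; a telescoping (Abel) sum gives $\sum_j jh_j=b$, so the blocks exhaust all $b$ rows. Inside one block, condition (4) together with Lemma \ref{unique} shows that the multiset of lengths of the $j$ rows labeled $i$ is \emph{uniquely determined} by its total $\Sigma_i$ (namely $r$ rows of length $s+1$ and $j-r$ of length $s$, where $\Sigma_i=sj+r$). Hence the whole block is encoded by the single weakly increasing string $\Sigma_{\lambda_{j+1}+1}\le\cdots\le\Sigma_{\lambda_j}$ of condition (2). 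I would then set up the bijection: condition (3) forces $\Sigma_i\ge j(\lambda_j+\lambda_{j+1}-1)$, so subtracting this minimum, $\sigma_i:=\Sigma_i-j(\lambda_j+\lambda_{j+1}-1)$, turns the upper bound of (2) into $\sigma_{\lambda_j}\le w_j$ (a one-line check that the gap between the bounds of (2) and (3) equals $w_j$). The $\sigma_i$ then form a weakly increasing sequence of $h_j$ integers in $[0,w_j]$, i.e.\ a partition inside the $w_j\times h_j$ rectangle, with generating function $\binom{w_j+h_j}{h_j}_q$. Taking the statistic on $\Lambda$ to be the total of all row lengths $\sum_i\Sigma_i$ (the size of the partition represented by $\Lambda$), the minimal configuration of block $j$ contributes the fixed power $q^{jh_j(\lambda_j+\lambda_{j+1}-1)}$, and a short Abel computation gives $\sum_j jh_j(\lambda_j+\lambda_{j+1}-1)=\sum_i\lambda_i^2-b=2\sum_i\binom{\lambda_i}{2}$, which is exactly the prefactor. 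Multiplying the block generating functions yields $F_\lambda$.

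The main obstacle, and where I would spend most of the care, is verifying that this encoding is a genuine bijection with \emph{valid} modular diagrams: I must check that reassembling the rows prescribed by an arbitrary choice of the $\sigma_i$ (sorted by length) produces a bona fide modular diagram---a Ferrers shape with every non-rightmost cell labeled $\lambda_1$ and with columns weakly decreasing---and that, conversely, every valid diagram satisfying (1) and (4) arises this way. The delicate point is that the monotonicity built into condition (2), both within a block and across successive blocks (whose label ranges $(\lambda_{j+1},\lambda_j]$ are themselves nested by size), must be shown to coincide with the top-to-bottom ordering of the rows by length and by label that the definition of a modular diagram forces. Once this compatibility is established, both directions of the bijection and the weight bookkeeping above are routine.
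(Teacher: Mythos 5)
Your proposal is correct and follows essentially the same route as the paper's own proof: the same corner-by-corner block decomposition, the same subtraction of the minimum $j(\lambda_j+\lambda_{j+1}-1)$ to land in the $\left(j(a+2)-2Y_j\right)\times(\lambda_j-\lambda_{j+1})$ rectangle, the same use of Lemma \ref{unique} to split each $\Sigma_i$ into $j$ near-equal row lengths, and the same telescoping computation of the prefactor $q^{2\sum_i\binom{\lambda_i}{2}}$. The compatibility/reversibility check you flag as the delicate point is exactly the step the paper itself dispatches with ``one moment's thought'' and ``we leave to the reader.''
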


\begin{proof}
We want to construct, with a series of bijections, the $\lambda_1$-modular diagrams whose generating function is $F_\lambda$. Because of conditions (1) and (3) notice that, for each index $j$ such that $\lambda_{j}>\lambda_{j+1}$, we have
$$\Sigma_{\lambda_{j+1}+1}\geq \lambda'_{\lambda_{j+1}+1}(\lambda_j+\lambda_{j+1}-1)=j(\lambda_j+\lambda_{j+1}-1).$$
Therefore, from  (2) it easily follows that 
$$\alpha^j:=(\alpha_{\lambda_{j}}=\Sigma_{\lambda_{j}}-j(\lambda_j+\lambda_{j+1}-1), \alpha_{\lambda_{j}-1}=\Sigma_{\lambda_{j}-1}-j(\lambda_j+\lambda_{j+1}-1), $$
$$\dots ,\alpha_{\lambda_{j+1}+1}=\Sigma_{\lambda_{j+1}+1}-j(\lambda_j+\lambda_{j+1}-1))$$
is a partition  contained inside a $(j(a+2) -2Y_j) \times (\lambda_j-\lambda_{j+1})$ rectangle, say $R_j$; the generating function for all such partitions is of course given by the Gaussian polynomial
$$\binom{j(a+2)-2Y_j+(\lambda_j-\lambda_{j+1})}{\lambda_j-\lambda_{j+1}}_q=\binom{j(a+2)-Y_{j-1}-Y_{j+1}}{\lambda_j-\lambda_{j+1}}_q.$$

Thanks to Lemma \ref{unique}, we can partition uniquely all the $\sum_{j\geq 1}(\lambda_j-\lambda_{j+1})=\lambda_1$ integers $\alpha_i$ (coming from all of the corresponding indices $j$) as $\alpha_i=(x_1^i,x_2^i,\dots ,x_j^i)$, where $x_1^i-x_j^i\leq 1$. Now, for all $j$, $i$, and $\ell =1,2,\dots ,j$, set 
$$\lambda_{\ell }^i:= x_{\ell }^i+\lambda_j +\lambda_{j+1}-1.$$
We have that the total number of integers $\lambda_{\ell }^i$, for all $j$, $i$ and $\ell$, is $\sum_{j\geq 1}j(\lambda_j-\lambda_{j+1})=b$. Therefore, for each $i$ and $\ell $, let us label all integers $\lambda_{\ell }^i$ with  $i$. One moment's thought  shows that, by properly rearranging all the $\lambda_{\ell }^i$ according to their length and label, we obtain a unique $\lambda_1$-modular diagram of length $b$ having the $\lambda_{\ell }^i$ as its rows.

Notice that, by summing over all indices $j$, $i$ and $\ell $ defined above, the sum of the entries $\lambda_j +\lambda_{j+1}-1$ coming from all the $\lambda_{\ell }^i$ is
$$\sum_{i\geq 1}i(\lambda_i-\lambda_{i+1})(\lambda_i+\lambda_{i+1}-1)=\sum_{i\geq 1}i(\lambda_i^2-\lambda_{i+1}^2)-i(\lambda_i-\lambda_{i+1})$$$$=\sum_{i\geq 1}\lambda_i^2-\lambda_i=2\sum_{i\geq 1}\binom{\lambda_i}{2}.$$

Also,  each of the $j$ partitions $\alpha^j$ above can be chosen independently inside a rectangle $R_j$. Therefore it easily follows that the above $\lambda_1$-modular diagrams are  enumerated by the generating function  $F_\lambda$. We leave to the reader the standard task of verifying that all steps of our construction are reversible. This concludes the proof of the theorem.
\end{proof}

\begin{remark}
The $\lambda_1$-modular diagrams constructed in the proof of Theorem \ref{a} are, in fact, all contained inside an $a\times b$ rectangle. Indeed, by condition (1) of the theorem, they contain exactly $\sum_{i\geq 1}\lambda'_i=\sum_{i\geq 1}\lambda_i=b$ (nonnegative) rows. Hence, if $\left\lceil n/d\right\rceil $ as usual denotes the smallest integer $\geq n/d$, by Lemma \ref{unique} and condition (2), it suffices to check  that, for all indices $j$ such that $\lambda_j>\lambda_{j+1}$,
\begin{equation}\label{inside}
\left\lceil \frac{j(a+1+\lambda_j+\lambda_{j+1})-2Y_j}{j}\right\rceil \leq a.
\end{equation}
But, clearly, $Y_j\geq j\lambda_j$. Thus, the left-hand side of inequality (\ref{inside}) is bounded from above by
$$\left\lceil \frac{j(a+1+\lambda_j+\lambda_{j+1})-2j\lambda_j}{j}\right\rceil=a+1-\lambda_j+\lambda_{j+1},$$
which is $\leq a$ since $\lambda_j>\lambda_{j+1}$, as desired.
\end{remark}

We illustrate the idea of the proof of Theorem \ref{a} with the following example.
\begin{figure}[h!t]
\includegraphics[scale=0.9]{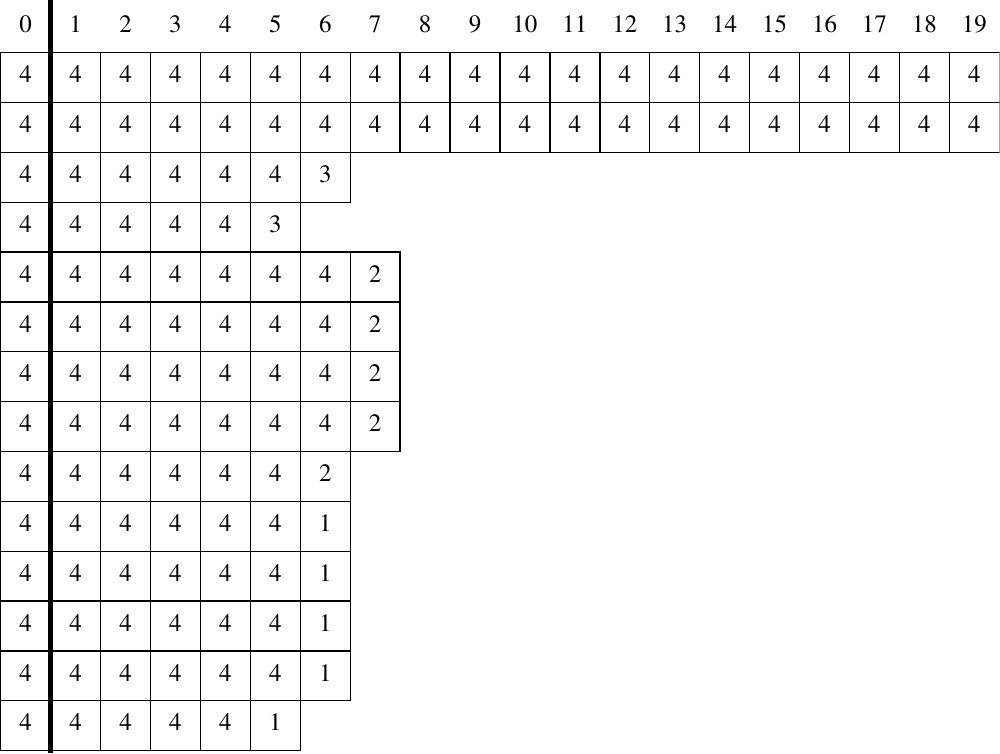}
\includegraphics[scale=0.9]{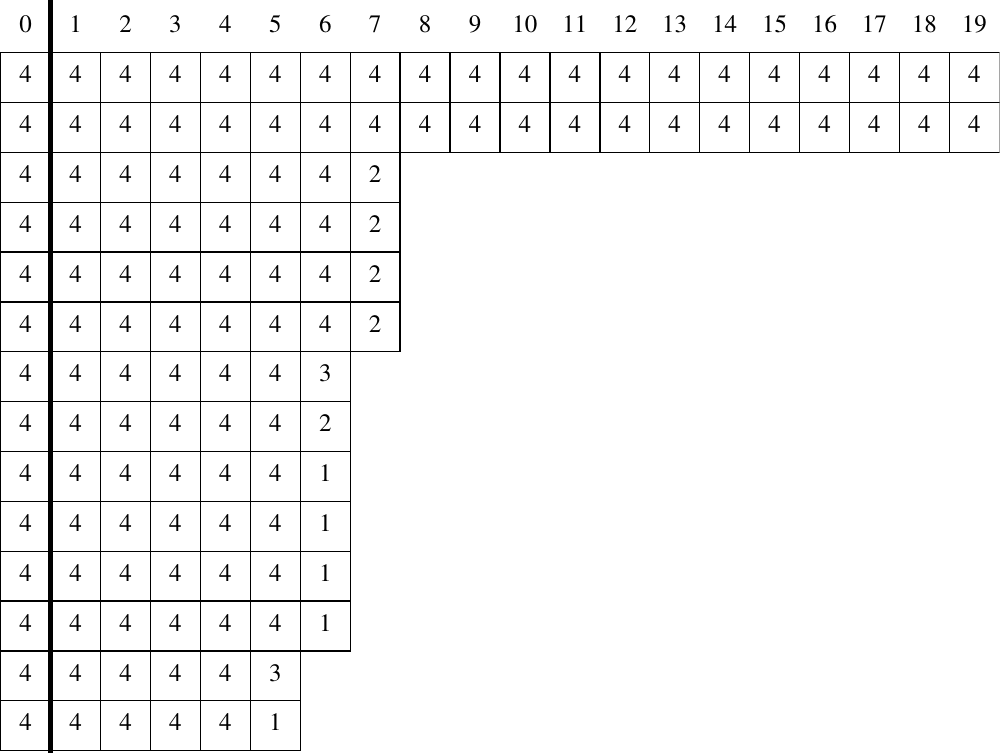} \caption{The 4-modular diagram of Example \ref{33}, before (above) and after (below) rearranging its rows according to  length and label.}
\end{figure}

\begin{example}\label{33}
Let  $a=20$, $b=14$, and $\lambda=(4^2,2^3)$. From Theorem \ref{a}, we can compute that the $\Sigma_i$ must satisfy the inequalities $10\leq \Sigma_3\leq \Sigma_4\leq 38$, and $5\leq \Sigma_1\leq \Sigma_2\leq 87$. Thus, we may freely choose any partition $\alpha^1=(\alpha_4,\alpha_3)$, whose entries  will in turn be partitioned to (eventually) give the rows labeled 4 and 3 of our 4-modular diagram of length $b=14$, inside a $28\times 2$ rectangle; and any partition $\alpha^2=(\alpha_2,\alpha_1)$, eventually yielding  the rows labeled 2 and 1, inside an $82\times 2$ rectangle.

Pick for instance $\alpha^1=(\alpha_4,\alpha_3)=(28,1)$ and $\alpha^2=(\alpha_2,\alpha_1)=(29,24)$. We want to construct the corresponding 4-modular diagram. By Lemma \ref{unique}, the $\alpha_i$ partition as follows: 
$$\alpha_1=(x_1^1,\dots ,x_5^1)=(5^4,4),$$
$$\alpha_2=(x_1^2,\dots ,x_5^2)=(6^4,5),$$ 
$$\alpha_3=(x_1^3,x_2^3)=(1,0),$$
$$\alpha_4=(x_1^4,x_2^4)=(14^2).$$

Therefore, the rows of our (eventual) 4-modular diagram are $\lambda_1^1=\lambda_2^1=\lambda_3^1=\lambda_4^1=6$ and $\lambda_5^1=5$ labeled 1; $\lambda_1^2=\lambda_2^2=\lambda_3^2=\lambda_4^2=7$ and $\lambda_5^2=6$ labeled 2; $\lambda_1^3=6$ and $\lambda_2^3=5$ labeled 3; and $\lambda_1^4=\lambda_2^4=19$ labeled 4. Rearranging them according to their length and label, we  uniquely determine the desired 4-modular diagram of length 14. (See Figure 2.)
\end{example}

As a very special case, Theorem \ref{a} recovers the generating function $F_{\left((b/k)^k\right)}$ of Theorem \ref{qt}, but with a different combinatorial interpretation. Namely, we have:

\begin{corollary}
Fix any positive  integer $k$ dividing $b$. Then 
$$q^{b^2/k-b} \binom{k(a+2)-b(2-1/k)}{{b/k}}_q$$
is the generating function for all $(b/k)$-modular diagrams $\Lambda $ of length $b$ (contained inside an $a\times b$ rectangle) satisfying the following conditions:
\begin{enumerate}
\item  $\Lambda $ has $k$ rows labeled $i$, for each $i=1,2,\dots ,b/k$;
\item If we denote by $\Sigma_i$ the sum of  the lengths of all rows of $\Lambda $ labeled $i$, then
$$\Sigma_{1}\leq \Sigma_{2}\leq \dots \leq \Sigma_{{b/k}}\leq k(a+1)-b;$$
\item All rows of $\Lambda $ have length at least $b/k-1$;
\item The difference between the lengths of the largest and the smallest row labeled with the same integer is at most 1.	
\end{enumerate}
\end{corollary}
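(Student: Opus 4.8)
The plan is to obtain the corollary simply by specializing Theorem \ref{a} to the partition $\lambda=\left((b/k)^k\right)\vdash b$, and then simplifying all the resulting quantities using the divisibility assumption $k\mid b$. First I would record the basic data of this $\lambda$: since $\lambda$ has exactly $k$ parts, each equal to $b/k$, its largest part is $\lambda_1=b/k$, so Theorem \ref{a} indeed produces $(b/k)$-modular diagrams of length $b$ contained in an $a\times b$ rectangle, as required. Its conjugate is $\lambda'=\left(k^{b/k}\right)$, that is, $\lambda'_i=k$ for $1\leq i\leq b/k$ and $\lambda'_i=0$ otherwise; substituting this into condition (1) of Theorem \ref{a} yields exactly condition (1) of the corollary.

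The key observation is that $\lambda_j>\lambda_{j+1}$ holds for the single index $j=k$ (where $\lambda_k=b/k>0=\lambda_{k+1}$), while $\lambda_j=\lambda_{j+1}=b/k$ for all $j<k$. Hence, by the parenthetical remark at the end of Theorem \ref{a}, conditions (2) and (3) are nonempty only for $j=k$, and all the combinatorial content is concentrated there. I would next compute the relevant partial sums for $j=k$: since $Y_k=\sum_{i=1}^{k}\lambda_i=k\cdot(b/k)=b$ and $\lambda_k+\lambda_{k+1}=b/k$, the bound in condition (2) becomes
$$k\bigl(a+1+\tfrac{b}{k}+0\bigr)-2Y_k=k(a+1)+b-2b=k(a+1)-b,$$
while the range of labels $\lambda_{k+1}+1,\dots,\lambda_k$ is $1,\dots,b/k$; this gives conditions (2) and (3) of the corollary (the latter because every row of such a diagram is labeled by some integer in $\{1,\dots,b/k\}$, and $\lambda_k+\lambda_{k+1}-1=b/k-1$). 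Condition (4) carries over verbatim.

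It remains to check that the generating function $F_\lambda$ collapses to the stated expression. For the prefactor I would use $\sum_{i\geq 1}\binom{\lambda_i}{2}=k\binom{b/k}{2}=\tfrac{1}{2}(b^2/k-b)$, so that $q^{2\sum_{i}\binom{\lambda_i}{2}}=q^{b^2/k-b}$. For the single surviving Gaussian factor (the one indexed by $j=k$), I would compute $Y_{k-1}=(k-1)(b/k)$ and $Y_{k+1}=Y_k=b$, whence
$$k(a+2)-Y_{k-1}-Y_{k+1}=k(a+2)-(k-1)\tfrac{b}{k}-b=k(a+2)-b\bigl(2-\tfrac{1}{k}\bigr),$$
and $\lambda_k-\lambda_{k+1}=b/k$, giving exactly $\binom{k(a+2)-b(2-1/k)}{b/k}_q$. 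Since the statement is a direct instance of Theorem \ref{a}, there is no genuine difficulty here; the only thing requiring care is the bookkeeping of the partial sums $Y_j$ together with the verification that every index $j\neq k$ contributes nothing, and once this is in place the corollary follows immediately.
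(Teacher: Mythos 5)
Your proposal is correct and follows exactly the paper's own route: the paper proves this corollary by the one-line observation that it is the case $\lambda=\left((b/k)^k\right)$ of Theorem \ref{a}, and your computations ($Y_k=b$, the single nontrivial index $j=k$, the exponent $b^2/k-b$, and the Gaussian factor $\binom{k(a+2)-b(2-1/k)}{b/k}_q$) are all accurate instances of that specialization. You have simply written out the bookkeeping that the paper leaves implicit.
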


\begin{proof}
This is simply the  case $\lambda=\left((b/k)^k\right)$ of Theorem \ref{a}.
\end{proof}

As another application  of Theorem \ref{a},  the following nice class of symmetric and unimodal MacMahon diagrams (always with nonnegative rows) can be  constructed as a special case:

\begin{corollary}
Let $2m_2+m_1=b$. Then
$$\frac{q^{3m_2+m_1}(1-q^{m_2(a-2)+1})(1-q^{m_2(a-2)+am_1+1})}{(1-q)^2}$$ 
is the generating function for all MacMahon diagrams $\Lambda $ of length $b$ (contained inside an $a\times b$ rectangle) satisfying the following conditions:
\begin{enumerate}
\item   $\Lambda $ has $m_2$ unmarked rows  and $m_1+m_2$ marked rows;
\item The sum of  the lengths of all marked rows of $\Lambda $ is at most $m_2(a-2)+am_1$,  and the sum of  the lengths of all unmarked rows  is at most $am_2$;
\item Any unmarked row has length at least 2;
\item The difference  between the lengths of  the largest and the smallest marked (resp., unmarked) row is at most 1.	
\end{enumerate}
\end{corollary}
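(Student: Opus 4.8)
The plan is to recognize this corollary as the specialization of Theorem \ref{a} to the partition $\lambda=(2^{m_2},1^{m_1})$, translated into the language of MacMahon diagrams via the bijection recorded in Definition \ref{ddd} (label $2\leftrightarrow$ unmarked, label $1\leftrightarrow$ marked). Since $2m_2+m_1=b$ we have $\lambda\vdash b$, and $\lambda_1=2$, so the relevant modular diagrams are $2$-modular, i.e.\ MacMahon. The first task is to check that the four conditions of Theorem \ref{a} collapse for this $\lambda$ into exactly conditions (1)--(4) of the corollary.

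First I would compute the conjugate: $m_1(\lambda)=m_1$ and $m_2(\lambda)=m_2$ give $\lambda'_1=m_1+m_2$, $\lambda'_2=m_2$, and $\lambda'_i=0$ for $i\geq 3$. Thus condition (1) of Theorem \ref{a} produces precisely $m_1+m_2$ rows labeled $1$ (marked) and $m_2$ rows labeled $2$ (unmarked), matching (1). The only indices $j$ with $\lambda_j>\lambda_{j+1}$ are $j=m_2$ (where $2\to 1$) and $j=m_2+m_1$ (where $1\to 0$); at both $\lambda_j-\lambda_{j+1}=1$, so every chain in condition (2) is a single inequality. Plugging these two indices into $j(a+1+\lambda_j+\lambda_{j+1})-2Y_j$, using $Y_{m_2}=2m_2$ and $Y_{m_2+m_1}=b$, yields the bounds $\Sigma_2\leq am_2$ and $\Sigma_1\leq m_2(a-2)+am_1$, which are exactly the two bounds in (2). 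Likewise the minimal length $\lambda_j+\lambda_{j+1}-1$ equals $2$ at $j=m_2$ (unmarked rows, giving (3)) and $0$ at $j=m_2+m_1$ (marked rows, a vacuous constraint, which is why (3) is stated only for unmarked rows); condition (4) is identical to that of Theorem \ref{a}.

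Next I would evaluate $F_\lambda$ directly. The exponent is $2\sum_i\binom{\lambda_i}{2}=2m_2$, and the product has exactly the two nonconstant factors coming from $j=m_2$ and $j=m_2+m_1$, namely $\binom{m_2(a-2)+1}{1}_q$ and $\binom{m_2(a-2)+am_1+1}{1}_q$. Hence
$$F_\lambda=\frac{q^{2m_2}\left(1-q^{m_2(a-2)+1}\right)\left(1-q^{m_2(a-2)+am_1+1}\right)}{(1-q)^2},$$
so the generating function claimed in the corollary is exactly $q^{m_1+m_2}F_\lambda$.

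The one genuinely delicate point is accounting for this factor $q^{m_1+m_2}$. In Theorem \ref{a} the weight recorded by $F_\lambda$ is the number of cells of the underlying Ferrers diagram, namely $\Sigma_1+\Sigma_2$; the corollary instead weights each MacMahon diagram by its \emph{size}, under which the $m_1+m_2$ marked cells each contribute an extra unit. I would verify that this size equals $(\Sigma_1+\Sigma_2)+(m_1+m_2)$ uniformly over all admissible diagrams, the subtle case being the marked rows of length zero, whose distinguished cell lives in the zeroth column and must be counted correctly under the refined Definition \ref{ddd}. Granting this identification, the corollary's generating function is precisely $q^{m_1+m_2}F_\lambda$, and the result then follows from Theorem \ref{a}.
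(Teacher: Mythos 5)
Your approach is exactly the paper's: the entire proof given there reads ``A standard computation shows that this is the case $\lambda=(2^{m_2},1^{m_1})$ of Theorem \ref{a}, where we replace all cells labeled 1 with marked cells and all cells labeled 2 with unmarked cells.'' Your verification of that dictionary is correct: $\lambda'_1=m_1+m_2$, $\lambda'_2=m_2$, the two nontrivial indices are $j=m_2$ and $j=m_2+m_1$, and the bounds $\Sigma_2\leq am_2$ and $\Sigma_1\leq m_2(a-2)+am_1$, the minimal length $2$ for unmarked rows (vacuous for marked ones), and the two Gaussian factors $\binom{m_2(a-2)+1}{1}_q$ and $\binom{m_2(a-2)+am_1+1}{1}_q$ all check out.

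The point you isolate at the end is real, and it is the one thing the paper's one-line proof does not address. A literal specialization of Theorem \ref{a}, whose generating function weights a modular diagram by the sum of its row lengths, gives $F_\lambda=q^{2m_2}\left(1-q^{m_2(a-2)+1}\right)\left(1-q^{m_2(a-2)+am_1+1}\right)/(1-q)^2$, and only this polynomial is symmetric about $ab/2$, as every KOH summand must be; the displayed polynomial with exponent $3m_2+m_1$ equals $q^{m_1+m_2}F_\lambda$ and is centered at $ab/2+m_1+m_2$. Your proposed reconciliation --- weighting a MacMahon diagram by (number of cells) plus (number of marks), a constant shift by $m_1+m_2$ over this class --- does make the formula come out right, but it is your own addition: nothing in Definition \ref{ddd}, nor in the paper's identification of $2$-modular diagrams with MacMahon diagrams, assigns that weight, and Theorem \ref{a} uses the unshifted one. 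So either the corollary's exponent should be $2m_2$, or the intended weight on MacMahon diagrams differs from the one used for modular diagrams elsewhere in the paper. Your write-up is the more careful of the two precisely because it surfaces this; to be complete you would need to commit to one of the two readings (and, under yours, actually carry out the pending check that the shift is uniformly $m_1+m_2$, including for marked rows of length zero) rather than leave it as a conditional.
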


\begin{proof}
A standard computation shows that this is the  case $\lambda=(2^{m_2},1^{m_1})$ of Theorem \ref{a}, where we replace all cells labeled 1 with marked cells and all cells labeled 2 with unmarked cells.
\end{proof}

We now present the second main theorem of this note, which generalizes the results, as well as the bijections, of \cite{BQQW} and \cite{QT}. We have:

\begin{theorem}\label{b}
Let $\lambda $ be a partition of $b$. For each index $d$, suppose  there exists a partition $\gamma^d=(\gamma_1^d,\gamma_2^d, \dots , \gamma_{\lambda_d-\lambda_{d+1}}^d)$ having distinct parts, such that  $\gamma_{\lambda_d-\lambda_{d+1}}^d=0$, $$\gamma_1^d=-2+2Y_d/d,$$
 and 
 $$\sum_{h,d}d\gamma_h^d =2\sum_{i\geq 1}\binom{\lambda_i}{2}.$$
(Notice that the partitions $\gamma^d$ are nonzero only when $\lambda_{d}>\lambda_{d+1}$.) Then, if $p$ is the number of distinct part sizes of $\lambda$, for any fixed $p$-tuple $(\gamma^{d_1},\gamma^{d_2},\dots ,\gamma^{d_p})$  of such nonzero partitions, where $d_1>d_2> \dots >d_p$, $F_\lambda$ is the generating function for all $p$-modular diagrams $\Theta $ of length $b$, contained inside an $a\times b$ rectangle,  such that, for each $j=1,2,\dots ,p$, $\Theta $  has ${d_j}(\lambda_{d_j}-\lambda_{{d_j}+1})$ rows labeled $j$, say
$$\Theta_1^j\geq \Theta_2^j\geq \dots \geq \Theta_{{d_j}(\lambda_{d_j}-\lambda_{{d_j}+1})}^j,$$ 
satisfying the following two conditions:
\begin{enumerate}
\item  $$\Theta_{i{d_j}+1}^j-\Theta_{{d_j}(i+1)}^j\leq 1,$$
for all $i=0,1,\dots ,\lambda_{d_j}-\lambda_{{d_j}+1}-1$;
\item $$\Theta_{c{d_j}+h}^j-\Theta_{{d_j}(c+1)+h}^j\geq \gamma_{c+1}^{d_j} -\gamma_{c+2}^{d_j},$$
for all $c=0,1,\dots ,\lambda_{d_j}-\lambda_{{d_j}+1}-2$ and all $h=1,2,\dots ,{d_j}$.
\end{enumerate}
\end{theorem}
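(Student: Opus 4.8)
The plan is to reduce the statement to a product, over the $p$ corner indices $d_1>d_2>\dots>d_p$ of $\lambda$ (those $d$ with $\lambda_d>\lambda_{d+1}$, which index the nontrivial Gaussian factors of $F_\lambda$), of a single-corner bijection, and then to reassemble. Fix one corner $d=d_j$ and write $w=\lambda_d-\lambda_{d+1}$, so there are $dw$ rows labeled $j$; I would group them into $w$ consecutive blocks of $d$ rows each, block $c$ (for $c=0,\dots,w-1$) being the rows $cd+1,\dots,(c+1)d$. Condition (1) asserts precisely that each block is almost constant, its largest and smallest entries differing by at most $1$, so by Lemma \ref{unique} each block is recovered uniquely from its sum $\Sigma^{(c)}=\sum_{h=1}^{d}\Theta_{cd+h}^{j}$.

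Next I would subtract the staircase encoded by $\gamma^{d}$: set $\tilde\Theta_{cd+h}^{j}:=\Theta_{cd+h}^{j}-\gamma_{c+1}^{d}$ and $\tilde\Sigma^{(c)}:=\Sigma^{(c)}-d\gamma_{c+1}^{d}$. A one-line rewriting shows condition (2) is equivalent to $\tilde\Theta_{cd+h}^{j}\geq\tilde\Theta_{(c+1)d+h}^{j}$ for every position $h$ and every $c$; and, using the explicit block shapes from Lemma \ref{unique}, this per-position monotonicity is in turn equivalent to the single chain $\tilde\Sigma^{(0)}\geq\tilde\Sigma^{(1)}\geq\dots\geq\tilde\Sigma^{(w-1)}$. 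Since $\gamma_{w}^{d}=0$ one has $\tilde\Sigma^{(w-1)}=\Sigma^{(w-1)}\geq 0$, with equality realizing a block of rows of length zero --- which is exactly where the modified definition of an $N$-modular diagram (allowing genuine zero rows) is needed. Since $\gamma_{1}^{d}=-2+2Y_{d}/d$, containment in width $a$ translates, via Lemma \ref{unique} applied to block $0$, into $\Sigma^{(0)}\leq ad$, i.e. $\tilde\Sigma^{(0)}\leq d(a+2)-2Y_{d}$. Hence the adjusted sums range over all partitions with at most $w$ parts inside the rectangle of height $d(a+2)-2Y_{d}$ and width $w$.

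Tracking the $q$-weight, I would record that $\sum_{c}\Sigma^{(c)}=\sum_{c}\tilde\Sigma^{(c)}+d\sum_{h}\gamma_{h}^{d}$, so this corner contributes $q^{d\sum_{h}\gamma_{h}^{d}}\binom{d(a+2)-2Y_{d}+w}{w}_{q}=q^{d\sum_{h}\gamma_{h}^{d}}\binom{d(a+2)-Y_{d-1}-Y_{d+1}}{\lambda_{d}-\lambda_{d+1}}_{q}$. Multiplying over the $p$ corners, the Gaussian factors reassemble the product in $F_\lambda$, while the exponents add to $\sum_{h,d}d\gamma_{h}^{d}=2\sum_{i\geq 1}\binom{\lambda_i}{2}$ by hypothesis, producing precisely the prefactor of $F_\lambda$. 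As in the proof of Theorem \ref{a}, the rows of all $p$ labels --- there are $\sum_{j}d_{j}(\lambda_{d_j}-\lambda_{d_j+1})=\sum_{d\geq 1}d(\lambda_d-\lambda_{d+1})=b$ of them --- assemble into a unique $p$-modular diagram of length $b$ by sorting on length and then on label, and the choices for different labels are independent; so the generating function factors as this product and equals $F_\lambda$.

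The step I expect to be the crux is reversibility: checking that every partition inside the rectangle genuinely yields an admissible $\Theta$ whose rows are weakly decreasing \emph{across} block boundaries. Reconstructing $\Theta_{cd+h}^{j}=\tilde\Theta_{cd+h}^{j}+\gamma_{c+1}^{d}$, within-block monotonicity is immediate, but at a boundary one needs $\Theta_{(c+1)d}^{j}\geq\Theta_{(c+1)d+1}^{j}$, that is $\tilde\Theta_{(c+1)d}^{j}-\tilde\Theta_{(c+1)d+1}^{j}\geq-(\gamma_{c+1}^{d}-\gamma_{c+2}^{d})$; since the minimum of block $c$ and the maximum of block $c+1$ can differ by $-1$, this forces $\gamma_{c+1}^{d}-\gamma_{c+2}^{d}\geq 1$. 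This is exactly why $\gamma^{d}$ is required to have \emph{distinct} parts, and it is the hypothesis that makes the whole bijection go through. The remaining verifications (that the blocks recovered from Lemma \ref{unique} are mutually consistent and that the assembly into one diagram is reversible) are routine, in the spirit of Theorem \ref{a}.
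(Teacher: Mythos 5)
Your proposal is correct and is essentially the paper's own bijection run in the opposite direction: your block sums $\tilde\Sigma^{(c)}$ are exactly the parts $\beta_{c+1}^{j}$ of the free partition in the rectangle $S_j$ of dimensions $d_j(a-\gamma_1^{d_j})\times(\lambda_{d_j}-\lambda_{d_j+1})$, and both arguments rest on Lemma \ref{unique} applied to block sums, the staircase shift by $d\gamma^{d}$, and independence across the $p$ corners. Your explicit treatment of the cross-block monotonicity (forcing $\gamma_{c+1}^{d}-\gamma_{c+2}^{d}\geq 1$, i.e.\ distinct parts) is precisely the reversibility check the paper defers to \cite{QT}, so there is no gap.
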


\begin{proof}
Fix $\lambda$ and the $p$ partitions $\gamma^{d_j}$ as in the statement. The main idea will be to generalize the Quinn-Tobiska bijections used to prove Theorem \ref{qt}, by means of our modular diagrams with nonnegative rows. Start by fixing any index $j=1,2,\dots ,p$. We want to construct bijectively the rows $\Theta_i^j$ labeled $j$ of our eventual $p$-modular diagram of length $b$.

Consider any partition
$$\beta^j:=\left(\beta_1^j,\beta_2^j,\dots ,\beta_{\lambda_{d_j}-\lambda_{{d_j}+1}}^j\right)$$
contained inside a ${d_j}(a-\gamma_1^{d_j})\times (\lambda_{d_j}-\lambda_{{d_j}+1})$ rectangle, say $S_j$. Define then a new partition $$\rho^j:=\left(\rho_1^j,\rho_2^j,\dots ,\rho_{\lambda_{d_j}-\lambda_{{d_j}+1}}^j\right),$$
where, for each $i=1,2,\dots ,\lambda_{d_j}-\lambda_{{d_j}+1}$, we set
$$\rho_i^j:=\beta_i^j+{d_j}\gamma_i^{d_j}.$$

Now, for each index $i$, thanks to Lemma \ref{unique}, we can in turn uniquely partition the integer $\rho_i^j$  as
$$\rho_i^j:=\left(\Theta_{{d_j}(i-1)+1}^j,\Theta_{{d_j}(i-1)+2}^j,\dots ,\Theta_{i{d_j}}^j\right).$$

We claim that these $\Theta_t^j$, for $t=1,2,\dots ,{d_j}(\lambda_{d_j}-\lambda_{{d_j}+1})$, are the rows labeled $j$ of our eventual $p$-modular diagram.

Indeed, we have $\Theta_1^j\geq \Theta_2^j\geq \dots \geq \Theta_{{d_j}(\lambda_j-\lambda_{j+1})}^j$, for the integers $\rho_i^j$ are different for all $i$ (since,  by hypothesis, $\gamma_{h}^{d_j} >\gamma_{h+1}^{d_j}$ for all $h$). Condition (1) of the statement is obviously satisfied by construction. Finally, we have $\rho_i^j-\rho_{i+1}^j\geq  {d_j}(\gamma_i^{d_j}-\gamma_{i+1}^{d_j})$, which, by dividing by ${d_j}$ and using Lemma \ref{unique}, is easily seen to imply condition (2) of the statement.

Furthermore, similarly to how we argued in the proof of Theorem \ref{a}, since we are considering our modular diagrams, all $j$ partitions $\beta^j$ can be chosen independently of one another inside their rectangles $S_j$. Therefore, the assumption $\gamma_1^{d_j}=-2+2Y_{d_j}/{d_j}$, which implies  that the sum of the two sides of the rectangle $S_j$ is
$${d_j}(a-\gamma_1^{d_j})+ (\lambda_{d_j}-\lambda_{{d_j}+1})= d_j(a+2)-Y_{d_j-1}-Y_{d_j+1},$$
easily gives that the generating function for all partitions constructed above is $F_\lambda$, as desired. 

That all steps we have performed in this proof are reversible is a standard fact to check, and follows the idea of the proof of Theorem \ref{qt} (see \cite{QT}), hence will be omitted.
\end{proof}

Let us illustrate the argument of Theorem \ref{b} with an example.

\begin{example}\label{39}
Fix the integers $a=15$ and $b=20$, and consider the partition $\lambda=(7,7,2,2,2)$ of $b$. Since $\lambda_2>\lambda_3$ and $\lambda_5>\lambda_6$, the two nonzero partitions $\gamma^d$ are $\gamma^2=(\gamma_1^2,\dots , \gamma_{5}^2)$ and $\gamma^5=(\gamma_1^5, \gamma_{2}^5)$. It is easy to check that we have $\gamma_1^2=12$, $\gamma_1^5=6$, and $\gamma_5^2=\gamma_2^5=0$. Let us pick $\gamma_2^2=10$, $\gamma_3^2=4$ and $\gamma_4^2=3$. 

Hence we may freely choose the partition $\beta^1 =(\beta_1^1, \beta_2^1, \beta_3^1, \beta_4^1, \beta_5^1)$ inside a $6\times 5$ rectangle, and the partition $\beta^2 =(\beta_1^2, \beta_2^2)$ inside a $45\times 2$ rectangle. Let us pick for instance $\beta^1 =(5,5,4,3,1)$ and $\beta^1 =(44,1)$, and construct the corresponding 2-modular diagram (or equivalently, the corresponding MacMahon diagram) of length $b=20$.

We have that the partitions $\rho^j$ we obtain from the $\beta^j$ are $\rho^1=(29,25,12,9,1)$ and $\rho^2=(74,1)$. Hence the rows labeled 1 of our eventual modular diagram, given by partitioning the entries of $\rho^1$ according to Lemma \ref{unique}, are:
$$\Theta_1^1=15,{\ }\Theta_2^1=14,{\ }\Theta_3^1=13,{\ }\Theta_4^1=12,{\ }\Theta_5^1=\Theta_6^1=6,{\ }\Theta_7^1=5,{\ }\Theta_8^1=4,{\ }\Theta_9^1=1,{\ }\Theta_{10}^1=0;$$
the rows labeled 2, obtained by partitioning the entries of $\rho^2$, are:
$$\Theta_1^2=\Theta_2^2=\Theta_3^2=\Theta_4^2=15, {\ }\Theta_5^2=14, {\ }\Theta_6^2=1, {\ }\Theta_7^2=\Theta_8^2=\Theta_9^2=\Theta_{10}^2=0.$$
Finally, by rearranging all the $\Theta_i^j$ according to their length and label, we uniquely determine our 2-modular diagram. (See Figure 3.)
\end{example}

\begin{remark}
\begin{enumerate}
\item Notice that one condition of Theorem \ref{b} implies the restriction on $\lambda$ that ${d_j}$ must divide $2Y_{d_j}$, for all $j$. Also, it is easy to see that, except in the degenerate case $\lambda=(1,1,\dots ,1)$,  we have $\gamma_1^{d_j}>0$ for each $j$. In particular, $\gamma_{\lambda_{d_j}-\lambda_{{d_j}+1}}^{d_j}\neq \gamma_1^{d_j}$, and therefore the parts of $\lambda$ that are not equal must differ by at least 2.

\item Our modified definition  of a modular diagram is necessary in the proof of Theorem \ref{b}. Indeed, for any $j$, the last entry $\rho_{\lambda_{d_j}-\lambda_{{d_j}+1}}^j$ of $\rho^j$ can be chosen to be small enough (in particular, any integer between 0 and ${d_j}-1$); that is, for suitable choices of the partitions $\beta^j$, the smallest values of the $\Theta_i^j$ can be simultaneously zero for more than one $j$. Therefore, in order to preserve the bijectivity of our maps and the conclusion of the theorem, we need also label and order the rows of length zero of $\Theta $.
\end{enumerate}
\end{remark}
\begin{center}
\begin{tabular}{cc}
\includegraphics[scale=0.8]{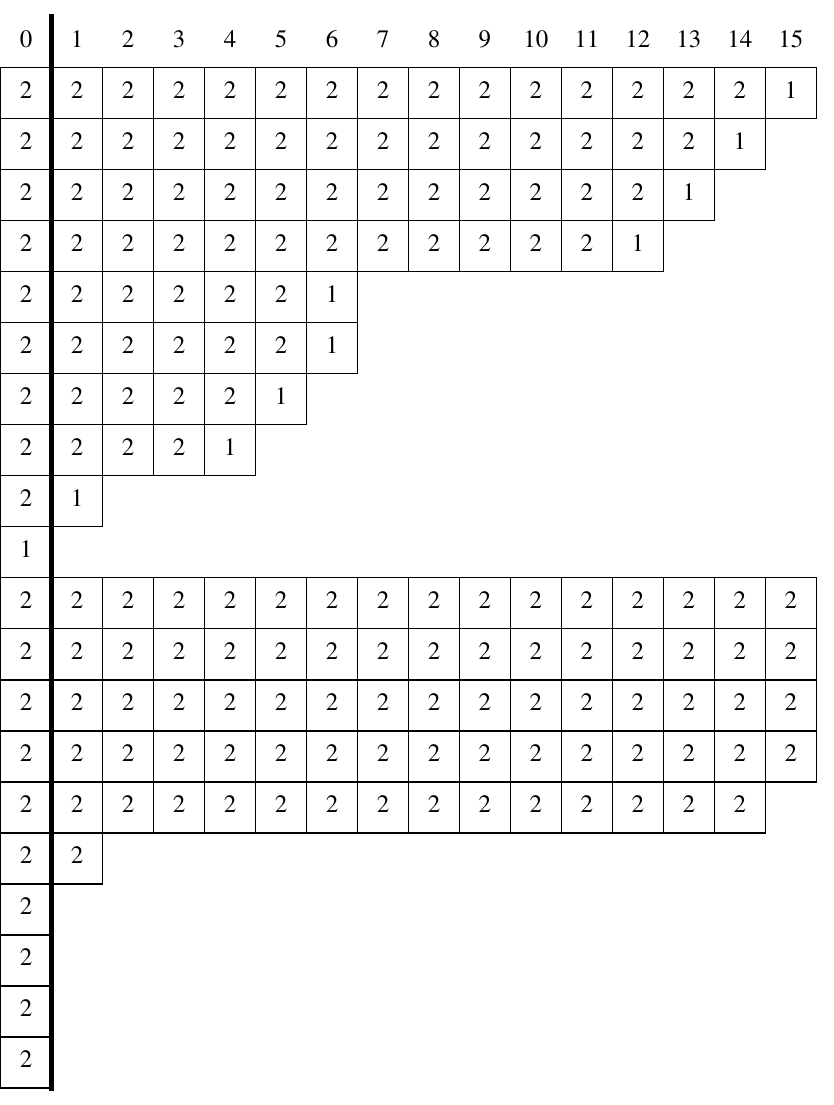}&
\includegraphics[scale=0.8]{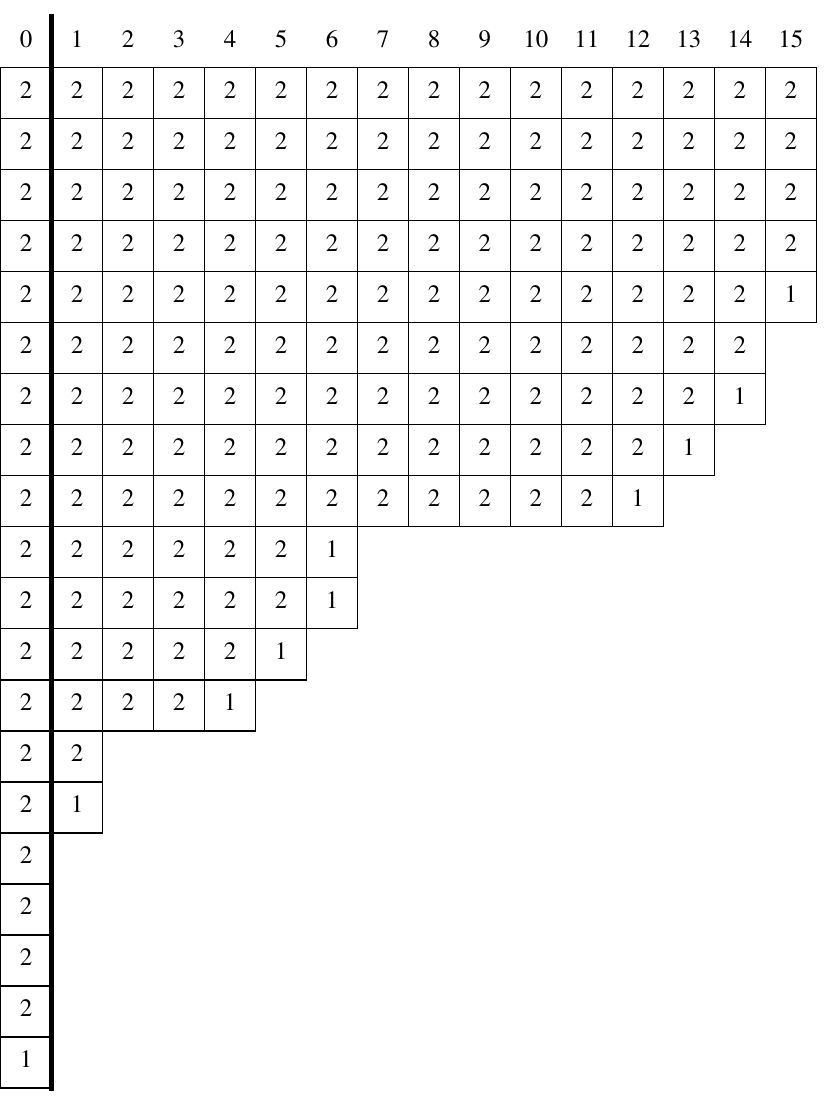}\\
(a)&
(b)\\
\end{tabular}
\end{center}
\begin{tabular}{cc}
\includegraphics[scale=0.8]{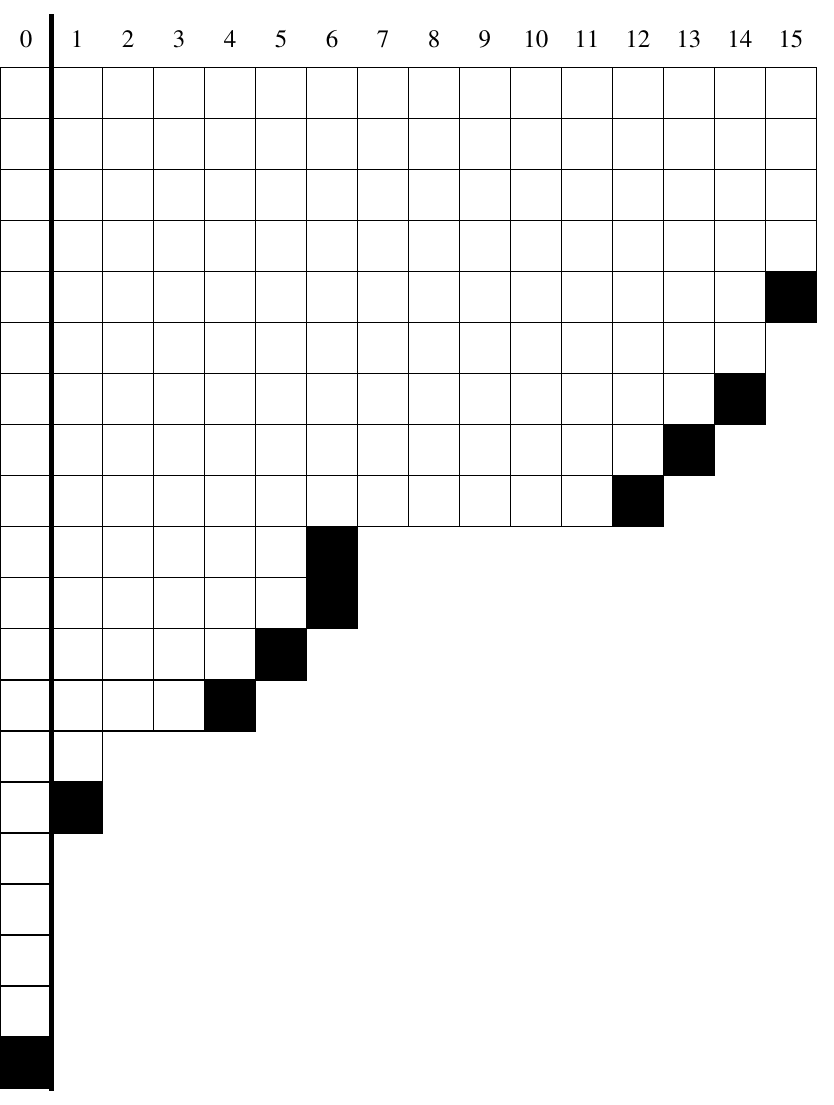}
(c)\\{\ }\\Figure 3. The 2-modular diagram of Example \ref{39}, (a) before and (b) after \\rearranging its rows, and (c) its corresponding MacMahon diagram.
\end{tabular}
{\ }\\
\\
\\\indent
The following remark shows how, by relaxing one of the assumptions of Theorem \ref{b} (but losing some elegance), we can obtain yet another broad class of modular diagrams with generating function $F_\lambda$.

\begin{remark}
The assumption that the partitions $\gamma^{d_j}$ be chosen to have distinct parts is necessary in the proof of Theorem \ref{b} in order to construct a $p$-modular diagram (instead of another $\lambda_1$-modular diagram). Indeed, if $\gamma_i^{d_j}=\gamma_{i+1}^{d_j}$ for some $j$ and $i$, we might lose the condition that the integers $\Theta_i^j$ be weakly decreasing for each given $j$. 

For instance, suppose that  the two largest parts of $\lambda$ both occur with multiplicity one, and that we may choose, say, $\rho_1^2=\rho_{2}^2=5$. Hence $\rho_1^2=(\Theta_{1}^2,\Theta_{2}^2)=(3,2)$ and $\rho_2^2=(\Theta_{3}^2,\Theta_{4}^2)=(3,2)$, giving the contradiction $\Theta_{2}^2<\Theta_{3}^2$. Notice that simply reordering the $\Theta_i^2$ would not suffice to save the bijectivity of the construction. Indeed, reordering  the above $\Theta_i^2$ gives the tuple $(3,3,2,2)$, but this  also corresponds to the values of $\Theta_i^2$ partitioning the integers $\rho_1^2=6$ and $\rho_{2}^2=4$.

We just mention here that, in fact, the assumptions on the integers $\gamma_i^{d_j}$ can be considerably relaxed, provided we suitably modify the conclusions of Theorem \ref{b} and, in particular,  consider again $\lambda_1$-modular diagrams   as opposed to our class of modular diagrams with only $p$ labels. The essential difference in this new construction is to require that each integer $\rho_i^j$ be partitioned into entries having a different label for  different $j$ \emph{and also for  different $i$}. We omit the exact statement of this (less elegant) alternative form of Theorem \ref{b}, whose argument and conclusions are  closer to those of Theorem \ref{a}.
\end{remark}

Note that, interestingly,  any special case of Theorem \ref{b} corresponding to $\lambda=\left((b/k)^k\right)$  provides a  symmetric and unimodal class of ordinary partitions contained inside an $a\times b$ rectangle with generating function $F_{\left((b/k)^k\right)}$. Namely, we have:

\begin{corollary}\label{int}
Fix any integer $k$ dividing $b$, and any partition $\gamma=\left(\gamma_1,\gamma_2, \dots , \gamma_{b/k}\right)$ having distinct parts, such that $\gamma_{b/k}=0$, $\gamma_1=2\left(b/k-1\right) $,  and
$$\sum_{h=1}^{{b/k}}\gamma_h =(b/k)^2-b/k.$$
Then, for any such given partition $\gamma $, 
$$ q^{b^2/k-b} \binom{k(a+2)-{b(2-1/k)}}{{b/k}}_q$$
is the generating function for all  partitions $\Theta$, contained inside an $a\times b$ rectangle, satisfying the following two conditions:
\begin{enumerate}
\item  $\Theta_{ik+1}-\Theta_{(i+1)k}\leq 1$, for all $i=0,1,\dots ,b/k-1$;
\item $\Theta_{ck+h}-\Theta_{(c+1)k+h}\geq \gamma_{c+1} -\gamma_{c+2},$ for all $c=0,1,\dots ,b/k-2$ and all $h=1,2,\dots ,k$.
\end{enumerate}
\end{corollary}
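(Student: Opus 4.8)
The plan is to derive Corollary \ref{int} as a direct specialization of Theorem \ref{b} to the partition $\lambda=\left((b/k)^k\right)$, just as the earlier corollary was obtained from Theorem \ref{a}. First I would record the structure of this $\lambda$: it has a single distinct part size (so $p=1$), with $\lambda_i=b/k$ for $1\leq i\leq k$ and $\lambda_i=0$ for $i>k$. The only index $d$ with $\lambda_d>\lambda_{d+1}$ is $d=k$, since that is where the drop from $b/k$ to $0$ occurs; hence there is exactly one nonzero partition $\gamma^{d}$, namely $\gamma^{k}$, and I would identify $\gamma:=\gamma^{k}$. With $d_1=k$, the length of $\gamma^{k}$ is $\lambda_k-\lambda_{k+1}=b/k$, matching the claimed $\left(\gamma_1,\dots,\gamma_{b/k}\right)$. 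The number of rows labeled $1$ (the only label, since $p=1$) is $d_1(\lambda_{d_1}-\lambda_{d_1+1})=k\cdot(b/k)=b$, which is consistent with an arbitrary partition $\Theta$ filling the full height of the $a\times b$ rectangle.

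Next I would translate each hypothesis of Theorem \ref{b} into the stated hypotheses on $\gamma$. The condition $\gamma_1^{d}=-2+2Y_d/d$ at $d=k$ becomes $\gamma_1=-2+2Y_k/k$; since $Y_k=\sum_{j=1}^k\lambda_j=k\cdot(b/k)=b$, this is $\gamma_1=-2+2b/k=2(b/k-1)$, exactly the required value. The endpoint condition $\gamma_{\lambda_d-\lambda_{d+1}}^{d}=0$ becomes $\gamma_{b/k}=0$. The global sum condition $\sum_{h,d}d\gamma_h^d=2\sum_{i\geq 1}\binom{\lambda_i}{2}$ reduces, since only $d=k$ contributes, to $k\sum_{h=1}^{b/k}\gamma_h=2\cdot k\binom{b/k}{2}=k\cdot(b/k)(b/k-1)$, i.e.\ $\sum_{h=1}^{b/k}\gamma_h=(b/k)^2-b/k$, which is precisely the stated constraint. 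The distinctness-of-parts hypothesis on $\gamma^{d}$ carries over verbatim to $\gamma$.

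Then I would check that conditions (1) and (2) of Theorem \ref{b} specialize to those of the corollary, dropping the now-redundant label superscript $j$ (there is only $j=1$) and writing $d_j=k$. Condition (1) of the theorem, $\Theta_{ik+1}^j-\Theta_{k(i+1)}^j\leq 1$ for $i=0,\dots,\lambda_{d_j}-\lambda_{d_j+1}-1=b/k-1$, becomes condition (1) of the corollary. Condition (2), $\Theta_{ck+h}^j-\Theta_{k(c+1)+h}^j\geq\gamma_{c+1}^{d_j}-\gamma_{c+2}^{d_j}$ for $c=0,\dots,b/k-2$ and $h=1,\dots,k$, becomes condition (2). Finally, since all $b$ rows carry the single label $1$, the $p$-modular ($=1$-modular) diagram $\Theta$ is nothing but an ordinary partition contained inside the $a\times b$ rectangle, so the generating function $F_\lambda$ produced by Theorem \ref{b} is exactly the displayed expression, which by Theorem \ref{qt} equals $q^{b^2/k-b}\binom{k(a+2)-b(2-1/k)}{b/k}_q$.

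I do not expect any genuine obstacle here, since the result is a clean specialization; the only care needed is the bookkeeping that verifies $Y_k=b$ and that $d=k$ is the unique contributing index, together with confirming that $p=1$ collapses the labeled diagrams to unrestricted partitions. Thus the entire proof amounts to substituting $\lambda=\left((b/k)^k\right)$ into Theorem \ref{b} and simplifying, and I would state it as such.
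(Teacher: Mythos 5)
Your proposal is correct and matches the paper's own proof, which likewise obtains the corollary by specializing Theorem \ref{b} to $\lambda=\left((b/k)^k\right)$ with $\gamma_i^{k}=\gamma_i$ and observing that $1$-modular diagrams are just ordinary partitions; your version merely spells out the bookkeeping ($p=1$, $d_1=k$, $Y_k=b$, the reduction of the sum condition) that the paper leaves implicit.
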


\begin{proof}
This is the special case of Theorem \ref{b} where $\lambda=\left((b/k)^k\right)$ and $\gamma_i^{k} =\gamma_i$. Since 1-modular diagrams are in obvious bijection with ordinary Ferrers diagrams and therefore with partitions, the result immediately follows.
\end{proof}

In particular, by choosing the $\gamma_i$ to have constant difference for all $i$, we  obtain, as a further special case, Quinn-Tobiska's Theorem \ref{qt}:

\begin{proof}
In Corollary \ref{int}, set  $\gamma_i-\gamma_{i+1}=2$  for all $i=1,2,\dots ,b/k-1$.
\end{proof}

As a final illustration, the following  class of symmetric and unimodal MacMahon diagrams can be constructed as a (very) special case of  Theorem \ref{b}.

\begin{corollary}
Let $tm_t+sm_s=b$, for some integers $t>s+1$ and $s>1$. Suppose that $\lambda'_1(s-1)$ divide $2(b-\lambda'_1)$, and that $t-s-1$ divide $2(t-1)$. Set $$A:=\frac{2(t-1)}{t-s-1}, {\ }{\ }{\ } B:=\frac{2(b-\lambda'_1)}{\lambda'_1(s-1)}.$$
Then $F_\lambda$ is the generating function for all MacMahon diagrams $\Upsilon $ of length $b$, contained inside an $a\times b$ rectangle, satisfying the following two conditions:
\begin{enumerate}
\item  $\Upsilon $ has $s\lambda'_1$ unmarked rows, say $\mu_1\geq \mu_2\geq \dots \geq \mu_{s\lambda'_1}$, such that $$\mu_{i\lambda'_1+1}-\mu_{\lambda'_1(i+1)}\leq 1$$
for all $i=0,1,\dots ,s-1$, and
$$\mu_i-\mu_{i+\lambda'_1}\geq B$$
for all $i=1,2,\dots ,\lambda'_1(s-1)$;\\
\item $\Upsilon $ has $m_t(t-s)$ marked rows, say $\nu_1\geq \nu_2\geq \dots \geq \nu_{m_t(t-s)}$, such that
$$\nu_{im_t+1}-\nu_{m_t(i+1)}\leq 1$$
for all $i=0,1,\dots ,t-s-1$, and
$$\nu_i-\nu_{i+m_t}\geq A$$
for all $i=1,2,\dots ,m_t(t-s-1)$.
\end{enumerate}
\end{corollary}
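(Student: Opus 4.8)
The plan is to apply Theorem~\ref{b} to the partition $\lambda=(t^{m_t},s^{m_s})$ of $b$, which has exactly $p=2$ distinct part sizes, so that the resulting $p$-modular diagrams are $2$-modular diagrams, i.e.\ (after relabeling cells) MacMahon diagrams. The two indices $d$ with $\lambda_d>\lambda_{d+1}$ are $d_1=m_t+m_s=\lambda'_1$ (where $\lambda$ drops from $s$ to $0$) and $d_2=m_t$ (where it drops from $t$ to $s$), and indeed $d_1>d_2$. With the convention that cells labeled $1$ are unmarked and cells labeled $2$ are marked, Theorem~\ref{b} produces $d_1(\lambda_{d_1}-\lambda_{d_1+1})=s\lambda'_1$ rows labeled $1$, which we take as the unmarked rows $\mu_i:=\Theta^1_i$, and $d_2(\lambda_{d_2}-\lambda_{d_2+1})=m_t(t-s)$ rows labeled $2$, taken as the marked rows $\nu_i:=\Theta^2_i$; these match the two row counts asserted in the corollary.

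Next I would specify the two partitions $\gamma^{d_1},\gamma^{d_2}$ required by Theorem~\ref{b}. Using $\gamma_1^{d}=-2+2Y_d/d$ with $Y_{d_1}=b$ and $Y_{d_2}=m_t t$, one finds $\gamma_1^{d_1}=2(b-\lambda'_1)/\lambda'_1$ and $\gamma_1^{d_2}=2(t-1)$. The key idea is to choose both $\gamma^{d_j}$ to be \emph{arithmetic progressions} descending to $0$: since $\gamma^{d_1}$ has $s$ parts and $\gamma^{d_2}$ has $t-s$ parts, this forces the common differences to be exactly $B=\gamma_1^{d_1}/(s-1)$ and $A=\gamma_1^{d_2}/(t-s-1)$, which are precisely the constants $B$ and $A$ of the statement. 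It is this choice that collapses the part-dependent bounds $\gamma^{d_j}_{c+1}-\gamma^{d_j}_{c+2}$ of condition~(2) in Theorem~\ref{b} into the single uniform thresholds. The divisibility hypotheses $\lambda'_1(s-1)\mid 2(b-\lambda'_1)$ and $(t-s-1)\mid 2(t-1)$ guarantee that $A,B$, and hence all parts of the $\gamma^{d_j}$, are integers; since $t>s+1$ and $s>1$ give $A,B>0$, these parts are distinct and decrease to $\gamma^{d_1}_s=\gamma^{d_2}_{t-s}=0$, as Theorem~\ref{b} demands.

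It then remains to verify the global constraint $\sum_{h,d}d\gamma_h^d=2\sum_{i\geq 1}\binom{\lambda_i}{2}$ and to translate the two numbered conditions. For the constraint, summing each arithmetic progression gives $d_1\sum_h\gamma^{d_1}_h=s(b-\lambda'_1)$ and $d_2\sum_h\gamma^{d_2}_h=m_t(t-s)(t-1)$, and a short computation using $\lambda'_1=m_t+m_s$ and $b=m_t t+m_s s$ shows their sum equals $m_t t^2+m_s s^2-b=2\sum_{i\geq 1}\binom{\lambda_i}{2}$; this is the only genuine calculation, and the cancellations in it are the step I expect to be most error-prone. Finally, condition~(1) of Theorem~\ref{b} for $j=1,2$ is verbatim the first inequality of each numbered condition here, while the reindexing $i=cd_j+h$ turns condition~(2) (with constant right-hand sides $B$ and $A$) into exactly the second inequalities $\mu_i-\mu_{i+\lambda'_1}\geq B$ and $\nu_i-\nu_{i+m_t}\geq A$, the index ranges matching up in each case. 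With these identifications, Theorem~\ref{b} yields that $F_\lambda$ is the generating function for the described MacMahon diagrams, completing the argument.
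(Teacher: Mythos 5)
Your proposal is correct and follows essentially the same route as the paper, which likewise obtains the corollary as the special case of Theorem~\ref{b} with $\lambda=(t^{m_t},s^{m_s})$ and both $\gamma^{d_j}$ chosen as arithmetic progressions with common differences $A$ and $B$; your verification of the sum condition $s(b-\lambda'_1)+m_t(t-s)(t-1)=m_tt^2+m_ss^2-b$ checks out. The only (immaterial) divergence is the marked/unmarked-to-label convention: the paper's proof writes $\Theta^2_\ell=\mu_\ell$ and $\Theta^1_\ell=\nu_\ell$, whereas your identification $\mu_\ell=\Theta^1_\ell$, $\nu_\ell=\Theta^2_\ell$ is the one forced by the row counts $s\lambda'_1=d_1(\lambda_{d_1}-\lambda_{d_1+1})$ and $m_t(t-s)=d_2(\lambda_{d_2}-\lambda_{d_2+1})$.
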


\begin{proof}
A standard computation shows this is the particular case of Theorem \ref{b} corresponding to $\lambda=(t^{m_t},s^{m_s})$, $\gamma_i^{m_t}-\gamma_{i+1}^{m_t}=A$  and $\gamma_i^{\lambda'_1}-\gamma_{i+1}^{\lambda'_1}=B$  for all $i$, and $\Theta_{\ell }^2=\mu_{\ell }$ and $\Theta_{\ell }^1=\nu_{\ell }$ for all $\ell $.
\end{proof}

\begin{remark}
It would be interesting  to determine a significant application to quantum physics of our results, so to also generalize  the applications presented in \cite{BQQW}  and  \cite{QT}. In order to do this, it might be useful to find a good combinatorial explanation for $\binom{a+b}{b}_q- F_\lambda(q)$ in terms of modular diagrams, for any partition $\lambda$ of $b$.
\end{remark}

\section*{Acknowledgments}\label{sec:acknowledgements} I warmly thank Richard Stanley for his terrific hospitality this year and for his encouragement and inspiration. It is thanks to him if I have extended my research interests to enumerative combinatorics. I also wish to thank the MIT Math Department for partial financial support, and Dr. Mark Gockenbach and the Michigan Tech Math Department, from which I am on partial leave, for extra summer support. I am  grateful to Jennifer Quinn for sending me an offprint of her paper \cite{QT}, and to David Clark, a finishing Ph.D. student in combinatorics at Michigan Tech, for  producing the figures included in this paper. I also thank an anonymous editor of JCTA for spotting a typo in the crucial formula, and the three referees for several comments that helped improve the presentation of this paper.


\end{document}